\documentclass[11pt,reqno]{amsart}

\pagestyle{plain}
\usepackage{amsmath}
\usepackage{amssymb}
\usepackage{graphicx}
\setlength{\textwidth}{14cm} \setlength{\textheight}{20.5cm}
\setlength{\evensidemargin}{\oddsidemargin}
\setlength{\topmargin}{1.5pt}
\newtheorem{thm}{Theorem}
\newtheorem{cor}{Corollary}
\newtheorem{lem}{Lemma}

\theoremstyle{definition}

\newtheorem{rem}{Remark}
\newcommand{\be}{\begin{equation}}
\newcommand{\ee}{\end{equation}}
\newcommand\bes{\begin{eqnarray}}
 \newcommand\ees{\end{eqnarray}}
\newcommand{\bess}{\begin{eqnarray*}}
\newcommand{\eess}{\end{eqnarray*}}

 \numberwithin{equation}{section}

\begin{document}
\vbox{\vskip 4cm}
\title{ The Generalized Entropy Ergodic Theorem for Nonhomogeneous Markov Chains}
\thanks{This work was supported by the National Natural Science Foundation of China (11071104)
, the National Natural Science Foundation of Anhui Province (1408085MA04)
and Foundation of Anhui Educational Committee (KJ2012B117). \\
 \indent{$^\dag$ The corresponding author, his email is wgyang@ujs.edu.cn}
 \indent}
\maketitle
\begin{center}
{\sl   Zhongzhi Wang $^{1}$   Weiguo Yang $^2 $$^\dag$
 }\\
\

1.School of Mathematics {\&}\ Physics, Anhui University of Technology, Ma'anshan, 243002, China\\
2.Faculty of Science, Jiangsu University, Zhenjiang, 212013, China
\end{center}

\

 \indent {\bf Abstract:}
       { \small Let $(\xi_n)_{n=0}^\infty$ be a nonhomogeneous Markov chain taking values from finite state-space of
       $\mathbf{X}=\{1,2,\ldots,b\}$. In this paper, we will study the generalized entropy ergodic theorem with almost-everywhere and $\mathcal{L}_1$ convergence for nonhomogeneous Markov chains, which generalizes the corresponding classical results for the Markov chains.

        \

{\bf{Key words and phrases:}} Nonhomogeneous Markov chains; generalized entropy ergodic theorem; almost-everywhere convergence\\

  \indent{\bf{2010 MR Subject Classification :}} 60F15, 94A37.\\

  \

\section{\bf Introduction}

\

We begin with introducing some notations that will be used throughout the paper. Assume that $(\xi_n)_{n=0}^\infty$ is a sequence of arbitrary random variables taking values from a finite set of
$\mathbf{X} =\{1,2,\cdots,b\}$ and
$(\Omega,\mathcal{F},\mathbb{P})$ the underlying probability space.
For convenience, denote by $
 \xi_{m,n}$ the random vector of $(\xi_m,\cdots,\xi_{m+n})$ and
 $x_{m,n}=(x_m,\cdots,x_{m+n})$, a realization of $\xi_{m,n}$. Suppose the joint distribution of $\xi_{m,n}$
 is
\bes \mathbb{P}(\xi_{m,n}=x_{m,n})=p(x_m,\cdots,x_{m+n})=p(x_{m,n}),
\ \ x_k\in \mathbf{X},\  m\leq k\leq m+n.\ees Let
$(a_n)_{n=0}^\infty$ and $(\phi(n))_{n=0}^\infty$ be two sequences
of nonnegative integers such that $\phi(n)$ converges to infinite as $n\rightarrow\infty$. Let \bes
f_{a_n,\phi(n)}(\omega)=-\frac{1}{\phi(n)}\log
p(\xi_{a_n,\phi(n)}),\ees where $\log$ is the natural logarithm.
$f_{a_n,\phi(n)}(\omega)$ will be called generalized entropy density
of $\xi_{a_n,\phi(n)}$. If $a_n\equiv 0$ and $\phi(n)=n$,
$f_{a_n,\phi(n)}(\omega)$ will become the classical entropy density
of $\xi_{0,n}$ defined as follows
 \bes f_{0,n}(\omega)=-\frac{1}{n}\log p(\xi_{0,n}).\ees
If $(\xi_n)_{n=0}^\infty$ is a nonhomogeneous Markov chain
 taking values in finite state-space of $\mathbf{X}
=\{1,2,\cdots,b\}$ with the initial distribution
 \bes(\mu_0(1),\cdots,\mu_0(b)),\ees and the
transition matrices \bes P_n=(p_{n}(i,j))_{b\times b},\ \ i,j\in
\mathbf{X},\ n=1,2\cdots,\ees where
$p_n(i,j)=\mathbb{P}(\xi_n=j|\xi_{n-1}=i)$, then
 \bes
f_{a_n,\phi(n)}(\omega)=-\frac{1}{\phi(n)}\left\{\log
\mu_{a_n}(\xi_{a_n})+\sum_{k=a_n+1}^{a_n+\phi(n)} \log
p_k(\xi_{k-1},\xi_k)\right\},\ees
where $\mu_{a_n}(x)$ is the distribution of $\xi_{a_n}$.

\

The convergence of $f_{0,n}(\omega)$ to a constant in a sense
of $\mathcal{L}_1$ convergence, convergence in probability or $a.e.$
convergence, is called Shannon-McMillan-Breiman theorem or entropy
ergodic theorem or asymptotic equipartition property (AEP) respectively in information theory. Shannon [10] first estabilished the entropy
ergodic theorem for convergence in probability for stationary
ergodic information sources with finite alphabet. McMillan [9] and
Breiman [3] obtained, for finite stationary ergodic information
sources, the entropy ergodic theorem in $\mathcal{L}_1$ and $a.e.$
convergence, respectively. Chung [6] considered the case of
countable alphabet. The entropy ergodic theorem for general
stochastic processes can be found, for example, in Barron [2],
Kieffer [8], or  Algoet and Cover [1]. Yang [12] obtained entropy
ergodic theorem for a class of nonhomogeneous Markov chains, Yang
and Liu [13], the entropy ergodic theorem for a class of $m$th-order
nonhomogeneous Markov chains, Zhong, Yang and Liang [14], entropy
ergodic theorem for a class of asymptotic circular Markov chains.

\

The second term of Eq. (1.6) is actually delayed sums of random
variables, which was first introduced by Zygmund
[15] who used it to prove a Tauberian theorem of Hardy. Since then, a lot of work has been done to investigate the properties of delayed sums. For example, by using the limiting behavior of delayed sums,
Chow [4] found necessary and sufficient conditions for the Borel summability of i.i.d. random variables and simplified the
proofs of a number of well-known results such as the
Hsu-Robbins-Spitzer-Katz theorem. Lai [11] studied the analogues of
the law of the iterated logarithm for delayed sums of independent
random variables. Recently, Gut and Stradtm\"{u}ller [7] studied the
strong law of large numbers for delayed sums of random fields.

\

Let $(\xi_n)_{n=0}^\infty$ be a nonhomogeneous Markov chain with the transition matrices (1.5).
 Yang [12] showed that the classical entropy density
$f_{0,n}(\omega)$ of this Markov chain converges $a.e.$ to the entropy rate of a Markov
chain  under the condition that
$\lim_{n\rightarrow\infty}\frac{1}{n}\sum_{k=1}^{n}|p_k(i,j)-p(i,j)|=0$,
for all $i,j\in \mathbf{X}$, where $P=(p(i,j))_{b\times b}$ is an
irreducible transition matrix. In this paper, we will prove that the
generalized entropy density $f_{a_n,\phi(n)}(\omega)$ converges
$a.e.$ and $\mathcal{L}_1$ to this entropy rate under some mild
conditions, which is called the generalized entropy ergodic theorem.
The results of this paper generalize the results of those in [12].

\

To prove the main results, we first establish a strong limit theorem for the delayed sums of the functions of two
variables for nonhomogeneous Markov chains, then we obtain the strong limit theorems of the frequencies of occurrence of states and the ordered couples of states in the segment
$\xi_{a_n},...,\xi_{a_n+\phi(n)}$ for the Markov chains. At the end
we present the main results. We also prove that
$f_{a_n,\phi(n)}(\omega)$ are uniformly integrable for arbitrary
finite sequence of random variables.

\

The approach used in this paper is different from the one used in
some previous works([12],[13]), where the strong law of large
numbers for martingale is applied. As $f_{a_n,\phi(n)}(\omega)$ is the delayed sums of $\log p_k(\xi_{k-1},\xi_{k})$, the strong law of large numbers for martingale cannot be applied. The essence of the technique used in this paper is first to construct a one parameter class of random variables with means of 1, then, using Borel-Cantelli lemma, to prove the existence of $a.e.$ convergence of certain random variables.

\

The rest of this paper is organized as follows. In section 2, we first establish some preliminary results that will be used to prove our main results, and present the main results of this paper and their proofs  in section 3.

\

 \section{\bf Some lemmas}

Before proving the main results, we first begin with some lemmas.

\

\begin{lem} Suppose $(\xi_n)_{n=0}^\infty$ is a nonhomogeneous Markov chain taking values from a finite state-space of $\mathbf{X} =\{1,2,\cdots,b\}$ with the initial
distribution (1.4) and the transition matrices (1.5). Suppose
$(a_n)_{n=0}^\infty$ and $(\phi(n))_{n=0}^\infty$ are two sequences
of nonnegative integers such that $\phi(n)$ tends to infinity as $n\rightarrow\infty$. Let $(g_n(x,y))_{n=0}^\infty$ be a sequence
of real functions defined on $\mathbf{X}\times\mathbf{X}$.
 If for every $\varepsilon>0$
\bes\sum_{n=1}^\infty\exp[-\varepsilon \phi(n)]<\infty,\ees
and there exists a real number $ 0<\gamma<\infty$ such that
\bes\limsup_n\frac{1}{\phi(n)}\sum_{k=a_n+1}^{a_n+\phi(n)}E[|g_k(\xi_{k-1},\xi_k)|^2
e^{\gamma|g_k(\xi_{k-1},\xi_k)|}|\xi_{k-1}]=c(\gamma;\omega)<\infty\
\ a.e.,\ees
 then, we have
\bes\lim_n\frac{1}{\phi(n)}\sum_{k=a_n+1}^{a_n+\phi(n)}\{g_k(\xi_{k-1},\xi_k)-E[g_k(\xi_{k-1},\xi_k)|\xi_{k-1}]\}=0\
\ a.e.\ees \end{lem}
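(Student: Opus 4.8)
The plan is to follow exactly the route announced in the introduction: construct a one-parameter family of nonnegative random variables of expectation $1$, control them through Markov's inequality and the Borel--Cantelli lemma, and then optimize over the parameter to squeeze out the conclusion (2.3). Throughout write $\mathcal{F}_{k-1}=\sigma(\xi_0,\dots,\xi_{k-1})$ and $g_k=g_k(\xi_{k-1},\xi_k)$ for brevity.

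First I would fix a real parameter $t$ with $|t|\le\gamma$ and set
\[
\Lambda_n(t,\omega)=\frac{\exp\Big\{t\sum_{k=a_n+1}^{a_n+\phi(n)}g_k\Big\}}{\prod_{k=a_n+1}^{a_n+\phi(n)}E\big[e^{t g_k}\mid\xi_{k-1}\big]}.
\]
Each factor $e^{tg_k}/E[e^{tg_k}\mid\xi_{k-1}]$ has conditional expectation $1$ given $\mathcal{F}_{k-1}$, because the Markov property gives $E[e^{tg_k}\mid\mathcal{F}_{k-1}]=E[e^{tg_k}\mid\xi_{k-1}]$; conditioning successively on $\mathcal{F}_{a_n+\phi(n)-1},\dots,\mathcal{F}_{a_n}$ and using the tower property then shows $E[\Lambda_n(t,\omega)]=1$ for every $n$. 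For each $\varepsilon>0$ Markov's inequality yields $\mathbb{P}(\Lambda_n(t,\omega)\ge e^{\varepsilon\phi(n)})\le e^{-\varepsilon\phi(n)}$, which is summable by (2.1); hence Borel--Cantelli, followed by letting $\varepsilon$ run through a sequence tending to $0$, gives $\limsup_n\frac{1}{\phi(n)}\log\Lambda_n(t,\omega)\le 0$ a.e.

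Next I would expand the logarithm in the denominator. Writing $\mu_k=E[g_k\mid\xi_{k-1}]$ and using the elementary inequality $|e^x-1-x|\le\frac12 x^2 e^{|x|}$ together with $\log(1+u)\le u$, one gets, for $0<t\le\gamma$ and since $e^{|t|\,|g_k|}\le e^{\gamma|g_k|}$,
\[
\log E\big[e^{tg_k}\mid\xi_{k-1}\big]\le t\mu_k+\frac{t^2}{2}E\big[g_k^2 e^{\gamma|g_k|}\mid\xi_{k-1}\big].
\]
Substituting this into $\frac{1}{\phi(n)}\log\Lambda_n(t,\omega)$ and invoking hypothesis (2.2) leads to
\[
\limsup_n\frac{t}{\phi(n)}\sum_{k=a_n+1}^{a_n+\phi(n)}\{g_k-\mu_k\}\le\frac{t^2}{2}c(\gamma;\omega)\quad a.e.
\]
Dividing by $t>0$ and letting $t$ decrease to $0$ along a countable sequence gives $\limsup_n\frac{1}{\phi(n)}\sum\{g_k-\mu_k\}\le 0$ a.e.; running the same argument for $-\gamma\le t<0$, where division by $t$ reverses the inequality, gives the matching $\liminf\ge 0$, and together these prove (2.3).

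The main obstacle, and the reason one cannot simply quote a single value of $t$, is that for each $t$ the exceptional null set depends on $t$; this is handled by establishing the inequalities only along a fixed countable sequence $t_m\to 0$ and discarding the countable union of the associated null sets. That step is legitimate precisely because $c(\gamma;\omega)<\infty$ a.e., so the error term $\frac{t_m}{2}c(\gamma;\omega)$ vanishes as $m\to\infty$. A secondary point to watch is that the range $|t|\le\gamma$ is exactly what makes $e^{|t|\,|g_k|}\le e^{\gamma|g_k|}$, so that the single hypothesis (2.2) dominates the second-order term uniformly over the chosen parameters.
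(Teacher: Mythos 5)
Your proposal is correct and follows essentially the same route as the paper's own proof: the same likelihood-ratio family $\Lambda_{a_n,\phi(n)}(s,\omega)$ with expectation $1$, the same Markov/Borel--Cantelli step using (2.1), the same expansion via $\log x\le x-1$ and $0\le e^x-1-x\le\frac{1}{2}x^2e^{|x|}$ to reach the bound $\frac{s}{2}c(\gamma;\omega)$, and the same limits $s\downarrow 0$ and $s\uparrow 0$ from both sides. Your explicit remark about restricting to a countable sequence $t_m\to 0$ to control the $t$-dependent null sets is a point the paper passes over silently, and it is handled correctly here.
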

\begin{rem} Obviously, condition (2.1) in Lemma
1 can be easily satisfied. For example, let $\phi(n)=[n^\alpha]
(\alpha>0)$, where $[\cdot]$ is the usual greatest integer function,
then (2.1) holds. If $(g_n(x,y))_{n=1}^\infty$ are uniformly
bounded, then Eq. (2.2) holds.
\end{rem}
\begin{rem} Since $E[g_k(\xi_{k-1},\xi_{k})|\xi_{k-1}]=\sum_{j=1}^bg_k(\xi_{k-1},j)p_k(\xi_{k-1},j)  $, Eq. (2.3) can be rewritten as
\bes\lim_{n}\frac{1}{\phi(n)}\sum_{k=a_n+1}^{a_n+\phi(n)}\{
g_k(\xi_{k-1},\xi_{k})-
\sum_{j=1}^bg_k(\xi_{k-1},j)p_k(\xi_{k-1},j)\}=0 \ \ a.e.\ees
 \end{rem}

 \

\begin{proof} Let $s$ be a nonzero real number, define
$$\Lambda_{a_n,\phi(n)}(s,\omega)=\frac{\exp\{s \sum_{k=a_n+1}^{a_n+\phi(n)}g_k(\xi_{k-1},\xi_k)\}}{\prod_{k=a_n+1}^{a_n+\phi(n)}E[e^{s g_k(\xi_{k-1},\xi_k)}|\xi_{k-1}]},\ \ n=1,2,\cdots.$$
and note that
\begin{align}&E\Lambda_{a_n,\phi(n)}(s,\omega)\notag\\=&E[E[\Lambda_{a_n,\phi(n)}(s,\omega)|\xi_{0,a_n+\phi(n)-1}]]\notag\\
=&E\left[E[\Lambda_{a_n,\phi(n)-1}(s,\omega)\frac{e^{s g_{a_n+\phi(n)}(\xi_{a_n+\phi(n)-1},\xi_{a_n+\phi(n)})}}
{E[e^{sg_{a_n+\phi(n)}(\xi_{a_n+\phi(n)-1},\xi_{a_n+\phi(n)})}|\xi_{a_n+\phi(n)-1}]}|\xi_{0,a_n+\phi(n)-1}]\right]\notag\\
=&E\left[\frac{\Lambda_{a_n,\phi(n)-1}(s,\omega) E[e^{sg_{a_n+\phi(n)}(\xi_{a_n+\phi(n)-1},\xi_{a_n+\phi(n)})}|\xi_{a_n+\phi(n)-1}] }{E[e^{sg_{a_n+\phi(n)}(\xi_{a_n+\phi(n)-1},\xi_{a_n+\phi(n)})}|\xi_{a_n+\phi(n)-1}]}\right]
\notag\\
=&E\Lambda_{a_n,\phi(n)-1}(s,\omega)=\cdots=E\Lambda_{a_n,1}(s,\omega)=1.
\end{align}
For any $\varepsilon>0$, by Markov inequality and Eq. (2.1), we have
\begin{align}&\sum_{n=1}^\infty \mathbb{P}\left[\phi^{-1}(n)\log \Lambda_{a_n,\phi(n)}(s,\omega)\geq
\varepsilon\right]\notag\\=&\sum_{n=1}^\infty\mathbb{P}\left[\Lambda_{a_n,\phi(n)}(s,\omega)\geq
\exp(\phi(n)\varepsilon)\right]\notag\\\leq&
\sum_{n=1}^\infty 1\cdot\exp(-\phi(n)\varepsilon)<\infty.\end{align}
 By Borel-Cantelli lemma and arbitrariness of $\varepsilon$, we have
\bes\limsup_n\frac{1}{\phi(n)}\log
\Lambda_{a_n,\phi(n)}(s,\omega)\leq0\
 \ a.e.\ees
Note that
\begin{align}&\frac{1}{\phi(n)}\log\Lambda_{a_n,\phi(n)}(s,\omega)\notag\\=&\frac{1}{\phi(n)}\sum_{k=a_n+1}^{a_n+\phi(n)}\{sg_k(\xi_{k-1},\xi_{k})-\log{E[e^{s
g_k(\xi_{k-1},\xi_k)}|\xi_{k-1}]}\}.\end{align} By Eqs. (2.7) and
(2.8), we have
\bes\limsup_n\frac{1}{\phi(n)}\sum_{k=a_n+1}^{a_n+\phi(n)}\left\{sg_k(\xi_{k-1},\xi_{k})-\log
E[e^{sg_k(\xi_{k-1},\xi_{k})}|\xi_{k-1}]\right\}\leq0\ \ a.e.\ees
Letting $0<s<\gamma$, dividing both sides of Eq. (2.9) by $s$, we
obtain
\bes\limsup_n\frac{1}{\phi(n)}\sum_{k=a_n+1}^{a_n+\phi(n)}\left\{g_k(\xi_{k-1},\xi_{k})-\frac{1}{s
}\log E[e^{s g_k(\xi_{k-1},\xi_{k})}|\xi_{k-1}]\right\}\leq0\ \
a.e.\ees Using the inequalities $\log x\leq x-1\ (x> 0)$ and $0\leq
e^x-1-x\leq \frac{1}{2}x^2e^{|x|}\ ( x\in \mathbf{R})$, from
Eq. (2.10), we have
\begin{align}&\limsup_n\frac{1}{\phi(n)}\sum_{k=a_n+1}^{a_n+\phi(n)}\{g_k(\xi_{k-1},\xi_{k})-E[g_k(\xi_{k-1},\xi_{k})|\xi_{k-1}]\}\notag\\
\leq&\limsup_n\frac{1}{ \phi(n)}\sum_{k=a_n+1}^{a_n+\phi(n)}\left\{\frac{1}{s
}\log E[e^{s g_k(\xi_{k-1},\xi_{k})}|\xi_{k-1}]-E[g_k(\xi_{k-1},\xi_{k})|\xi_{k-1}]\right\}\notag\\
=&\limsup_n\frac{1}{
\phi(n)}\sum_{k=a_n+1}^{a_n+\phi(n)}\left\{\frac{E[(e^{sg_k(\xi_{k-1},\xi_{k})}-1-sg_k(\xi_{k-1},\xi_{k}))|\xi_{k-1}]}{s}\right\}\notag\\
\leq&\frac{s}{2}\limsup_n\frac{1}{
\phi(n)}\sum_{k=a_n+1}^{a_n+\phi(n)}E[g_k^2(\xi_{k-1},\xi_{k})e^{s|g_k(\xi_{k-1},\xi_{k})|}|\xi_{k-1}]\notag\\ \leq &\frac{1}{2}sc(\gamma;\omega)<\infty
\ \ \ a.e.\end{align}
 Letting $s\downarrow0$ in Eq. (2.11), we obtain
\bes\limsup_n\frac{1}{\phi(n)}\sum_{k=a_n+1}^{a_n+\phi(n)}[g_k(\xi_{k-1},\xi_{k})-E(g_k(\xi_{k-1},\xi_{k})|\xi_{k-1})]\leq0\
\ a.e.\ees Letting  $-\gamma<s<0$ in (2.9), similarly, we obtain that
\bes\liminf_n\frac{1}{\phi(n)}\sum_{k=a_n+1}^{a_n+\phi(n)}[g_k(\xi_{k-1},\xi_{k})-E(g_k(\xi_{k-1},\xi_{k})|\xi_{k-1})]\geq0\
\ a.e.\ees Eq. (2.3) follows immediately from Eqs. (2.12) and
(2.13).
\end{proof}

\

Let $\mathbf{1}_{\{\cdot\}}(\cdot)$ be the indicator function
and $S_{m,n}(j;\omega)$  the number of occurrences of $j$ in the
segment $\xi_{m},\cdots,\xi_{m+n-1}$.
It is easy to see that
$$S_{m,n}(j;\omega)=\sum_{k=m}^{m+n-1}\mathbf{1}_{\{j\}}(\xi_k) ~~ $$

Let $S_{m,n}(i,j;\omega)$ be the number of occurrences of the pair
 $(i,j)$ in the sequence of ordered pairs
$(\xi_{m},\xi_{m+1}),\cdots,$ $(\xi_{m+n-1},\xi_{m+n})$. Then
$$S_{m,n}(i,j;\omega)=\sum_{k=m}^{m+n-1}\mathbf{1}_{\{i\}}(\xi_k)\mathbf{1}_{\{j\}}(\xi_{k+1}) ~~$$

\begin{cor} \ Under the conditions of Lemma 1, let $S_{m,n}(j;\omega)$ be defined as before.
Then \bes
\lim_{n}\frac{1}{\phi(n)}\{S_{a_n,\phi(n)}(j;\omega)-\sum_{k=a_n+1}^{a_n+\phi(n)}
p_k(\xi_{k-1},j)\}=0 \ \ a.e.\ees
\end{cor}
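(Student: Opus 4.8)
The plan is to deduce the corollary as a direct specialization of Lemma 1 to indicator functions. Fix $j\in\mathbf{X}$ and take the choice $g_k(x,y)=\mathbf{1}_{\{j\}}(y)$, which is independent of both $k$ and $x$. First I would verify the two hypotheses of Lemma 1 for this $g_k$: condition (2.1) is inherited verbatim since we are working ``under the conditions of Lemma 1,'' and condition (2.2) holds automatically because $\mathbf{1}_{\{j\}}$ is bounded by $1$. Indeed $|g_k|^2 e^{\gamma|g_k|}\le e^{\gamma}$ for every $\gamma>0$, so the conditional expectations appearing in (2.2) are at most $e^{\gamma}$ and thus $c(\gamma;\omega)\le e^{\gamma}<\infty$ almost everywhere. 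This is precisely the uniformly bounded case flagged in Remark 2.

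With this choice one has $g_k(\xi_{k-1},\xi_k)=\mathbf{1}_{\{j\}}(\xi_k)$, so that $\sum_{k=a_n+1}^{a_n+\phi(n)}g_k(\xi_{k-1},\xi_k)=\sum_{k=a_n+1}^{a_n+\phi(n)}\mathbf{1}_{\{j\}}(\xi_k)$, while the Markov property identifies the conditional mean as $E[\mathbf{1}_{\{j\}}(\xi_k)\mid\xi_{k-1}]=p_k(\xi_{k-1},j)$. Applying Lemma 1 in the rewritten form of Remark 3 then yields
\[
\lim_n\frac{1}{\phi(n)}\left\{\sum_{k=a_n+1}^{a_n+\phi(n)}\mathbf{1}_{\{j\}}(\xi_k)-\sum_{k=a_n+1}^{a_n+\phi(n)}p_k(\xi_{k-1},j)\right\}=0\quad a.e.
\]
The remaining step is to replace the count $\sum_{k=a_n+1}^{a_n+\phi(n)}\mathbf{1}_{\{j\}}(\xi_k)$ by $S_{a_n,\phi(n)}(j;\omega)=\sum_{k=a_n}^{a_n+\phi(n)-1}\mathbf{1}_{\{j\}}(\xi_k)$. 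These two sums share all terms with $k=a_n+1,\dots,a_n+\phi(n)-1$ and differ only in their boundary terms, so their difference equals $\mathbf{1}_{\{j\}}(\xi_{a_n+\phi(n)})-\mathbf{1}_{\{j\}}(\xi_{a_n})$, which is bounded by $1$ in absolute value. Dividing by $\phi(n)\to\infty$ makes this discrepancy vanish in the limit, so the displayed identity transfers to $S_{a_n,\phi(n)}(j;\omega)$ and gives (2.14).

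I do not expect a genuine obstacle here, since the substantive analytic work has already been carried out in Lemma 1 and the two hypotheses hold trivially (one by assumption, the other by boundedness of the indicator). The only point demanding a little care is the index bookkeeping between the definition of $S_{m,n}$ and the summation range $k=a_n+1,\dots,a_n+\phi(n)$ in Lemma 1; for this reason I would isolate the boundary-term estimate as an explicit final step rather than let it be absorbed silently into the limit.
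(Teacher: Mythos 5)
Your proposal is correct and follows essentially the same route as the paper: both take $g_k(x,y)=\mathbf{1}_{\{j\}}(y)$ in Lemma 1, note that (2.2) holds trivially by boundedness of the indicator, identify the conditional expectation as $p_k(\xi_{k-1},j)$, and reconcile the shifted sum $\sum_{k=a_n+1}^{a_n+\phi(n)}\mathbf{1}_{\{j\}}(\xi_k)$ with $S_{a_n,\phi(n)}(j;\omega)$ via the boundary terms $\mathbf{1}_{\{j\}}(\xi_{a_n+\phi(n)})-\mathbf{1}_{\{j\}}(\xi_{a_n})$, which vanish after dividing by $\phi(n)$. Your explicit isolation of the boundary-term estimate is exactly the identity the paper records in its Eq.\ (2.15).
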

\begin{proof} Let $ g_k(x,y)=\mathbf{1}_{\{j\}}(y)$ in Lemma 1. It is easy to see that $\{ g_k(x,y), k\geq 1\}$ satisfy the Eq. (2.2) of Lemma 1. Noticing that
\begin{align} &\sum_{k=a_n+1}^{a_n+\phi(n)}\{g_k(\xi_{k-1},\xi_{k})-
\sum_{l=1}^bg_k(\xi_{k-1},l)p_k(\xi_{k-1},l)\}\notag\\
=&\sum_{k=a_n+1}^{a_n+\phi(n)}\{\mathbf{1}_{\{j\}}(\xi_k)-\sum_{l=1}^b\mathbf{1}_{\{j\}}(l) p_{k}(\xi_{k-1},l)\}\notag\\
=& S_{a_n,\phi(n)}(j;\omega)+\mathbf{1}_{\{j\}}(\xi_{a_n+\phi(n)})-\mathbf{1}_{\{j\}}(\xi_{a_n})-\sum_{k=a_n+1}^{a_n+\phi(n)} p_{k}(\xi_{k-1},j),
\end{align}
Eq. (2.14) follows from Lemma 1.
\end{proof}

\begin{cor} Under the conditions of Lemma 1, let $S_{m,n}(i,j;\omega)$ be defined as before. Then \bes
\lim_{n}\frac{1}{\phi(n)}\{S_{a_n,\phi(n)}(i,j;\omega)-\sum_{k=a_n+1}^{a_n+\phi(n)}
\mathbf{1}_{\{i\}}(\xi_{k-1})p_k(i,j)\}=0 \ \ a.e.\ees
\end{cor}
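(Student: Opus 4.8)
The plan is to apply Lemma 1 (in the equivalent form of Remark 2) to the specific choice $g_k(x,y)=\mathbf{1}_{\{i\}}(x)\mathbf{1}_{\{j\}}(y)$, paralleling the proof of Corollary 1 but replacing the single indicator by a product of two. First I would verify the hypotheses of Lemma 1. Since $|g_k(x,y)|\le 1$ for every $k$ and every $(x,y)\in\mathbf{X}\times\mathbf{X}$, the sequence $(g_k)$ is uniformly bounded, so by Remark 1 condition (2.2) holds automatically; concretely $|g_k|^2 e^{\gamma|g_k|}\le e^{\gamma}$, whence the relevant conditional expectations are bounded by $e^{\gamma}$ and the $\limsup$ of their averages is finite. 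Condition (2.1) is inherited from the standing hypotheses of Lemma 1.

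The core of the argument is then to match the two sums in the conclusion (2.3) with the two quantities appearing in (2.16). For the first sum, I would write $\sum_{k=a_n+1}^{a_n+\phi(n)} g_k(\xi_{k-1},\xi_k)=\sum_{k=a_n+1}^{a_n+\phi(n)}\mathbf{1}_{\{i\}}(\xi_{k-1})\mathbf{1}_{\{j\}}(\xi_k)$ and reindex by $k\mapsto k-1$, turning this into $\sum_{k=a_n}^{a_n+\phi(n)-1}\mathbf{1}_{\{i\}}(\xi_k)\mathbf{1}_{\{j\}}(\xi_{k+1})$, which is exactly $S_{a_n,\phi(n)}(i,j;\omega)$. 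A convenient feature here, in contrast with Corollary 1, is that the pair indicator already couples two consecutive times, so the reindexing is exact and no boundary correction terms survive.

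For the second sum I would use the identity $E[g_k(\xi_{k-1},\xi_k)|\xi_{k-1}]=\sum_{l=1}^b g_k(\xi_{k-1},l)p_k(\xi_{k-1},l)$ recorded in Remark 2. With the chosen $g_k$ this collapses to $\mathbf{1}_{\{i\}}(\xi_{k-1})p_k(\xi_{k-1},j)$, and because the leading indicator vanishes unless $\xi_{k-1}=i$, it equals $\mathbf{1}_{\{i\}}(\xi_{k-1})p_k(i,j)$. Summing over $k$ from $a_n+1$ to $a_n+\phi(n)$ recovers precisely the subtracted term in (2.16). Substituting both identifications into the conclusion of Lemma 1 then yields (2.16) directly.

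Since the statement is a direct specialization of Lemma 1, I do not expect any genuine obstacle. The only steps requiring care are the bookkeeping in the reindexing and the observation that, unlike in Corollary 1, there are no endpoint terms to carry along; had any appeared they would in any case be harmless, being of order $1/\phi(n)\to 0$.
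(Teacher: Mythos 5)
Your proposal is correct and follows essentially the same route as the paper: the same choice $g_k(x,y)=\mathbf{1}_{\{i\}}(x)\mathbf{1}_{\{j\}}(y)$, the same verification of (2.2) via uniform boundedness, and the same identification of $\sum_{k=a_n+1}^{a_n+\phi(n)}g_k(\xi_{k-1},\xi_k)$ with $S_{a_n,\phi(n)}(i,j;\omega)$ and of $E[g_k(\xi_{k-1},\xi_k)|\xi_{k-1}]$ with $\mathbf{1}_{\{i\}}(\xi_{k-1})p_k(i,j)$. Your observation that, unlike Corollary 1, no boundary terms arise is exactly what the paper's computation (2.17) shows implicitly.
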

\begin{proof} Let $g_k(x,y)=\mathbf{1}_{\{i\}}(x)\mathbf{1}_{\{j\}}(y)$ in Lemma 1. It is easy to see that $\{ g_k(x,y), k\geq 1\}$ satisfy the Eq. (2.2) of Lemma 1. Noticing that
\begin{align} &\sum_{k=a_n+1}^{a_n+\phi(n)}\{g_k(\xi_{k-1},\xi_{k})-
\sum_{l=1}^bg_k(\xi_{k-1},l)p_k(\xi_{k-1},l)\}\notag\\
=&\sum_{k=a_n+1}^{a_n+\phi(n)}\{\mathbf{1}_{\{i\}}(\xi_{k-1})\mathbf{1}_{\{j\}}(\xi_k)-\sum_{l=1}^b\mathbf{1}_{\{i\}}(\xi_{k-1})\mathbf{1}_{\{j\}}(l)p_k(\xi_{k-1},l)\}\notag\\
=& S_{a_n,\phi(n)}(i,j;\omega)-\sum_{k=a_n+1}^{a_n+\phi(n)}
\mathbf{1}_{\{i\}}(\xi_{k-1})p_k(i,j),\end{align} Eq. (2.16)
follows from Lemma 1.
\end{proof}

\begin{lem} Let $(a_n)_{n=0}^\infty$ and $(\phi(n))_{n=0}^\infty$ be as in Lemma 1, and $h(x)$ be a bounded function defined on an interval I,
and $(x_n)_{n=0}^\infty$ a sequence in I. If
$$\lim_{n}\frac{1}{\phi(n)}\sum_{k=a_n+1}^{a_n+\phi(n)}|x_{k}-x|=0,$$
and $h(x)$ is continuous at point $x$, then,
$$\lim_{n}\frac{1}{\phi(n)}\sum_{k=a_n+1}^{a_n+\phi(n)}|h(x_{k})-h(x)|=0.$$\end{lem}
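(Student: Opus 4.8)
The plan is to argue by the standard $\varepsilon$--$\delta$ splitting. Continuity of $h$ at $x$ controls $|h(x_k)-h(x)|$ when $x_k$ lies near $x$, boundedness of $h$ gives a uniform bound on $|h(x_k)-h(x)|$ everywhere, and the hypothesis forces the proportion of indices $k$ with $x_k$ far from $x$ to vanish in the Ces\`aro sense.

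First I would set $M=\sup_{t\in I}|h(t)|<\infty$, finite since $h$ is bounded, so that $|h(x_k)-h(x)|\le 2M$ for every $k$. Fix $\varepsilon>0$; continuity of $h$ at $x$ yields $\delta>0$ such that $|h(t)-h(x)|<\varepsilon$ whenever $t\in I$ and $|t-x|<\delta$. I would then split the index block $\{a_n+1,\dots,a_n+\phi(n)\}$ into the good set $G_n=\{k:|x_k-x|<\delta\}$ and the bad set $B_n=\{k:|x_k-x|\ge\delta\}$, so that each summand is $<\varepsilon$ on $G_n$ and $\le 2M$ on $B_n$.

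The one quantitative point --- really the only step requiring any thought --- is the Markov-type estimate for the size of $B_n$: since $|x_k-x|\ge\delta$ for $k\in B_n$, we have $\delta\,\#B_n\le\sum_{k\in B_n}|x_k-x|\le\sum_{k=a_n+1}^{a_n+\phi(n)}|x_k-x|$, and therefore $\#B_n/\phi(n)\le\delta^{-1}\phi(n)^{-1}\sum_{k=a_n+1}^{a_n+\phi(n)}|x_k-x|\to 0$ by the hypothesis. Combining the two regimes gives $\phi(n)^{-1}\sum_{k=a_n+1}^{a_n+\phi(n)}|h(x_k)-h(x)|\le\varepsilon+2M\,\#B_n/\phi(n)$, so that $\limsup_n$ of the left-hand side is at most $\varepsilon$. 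Since $\varepsilon>0$ is arbitrary and the left-hand side is nonnegative, the limit is $0$. No uniform continuity of $h$, nor any property of the sequences beyond those stated in Lemma~1, is needed; the whole argument is deterministic and does not invoke the Markov structure at all.
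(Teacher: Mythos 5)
Your proof is correct and complete: the $\varepsilon$--$\delta$ split into good and bad indices, with the Chebyshev-type count $\#B_n \le \delta^{-1}\sum_{k=a_n+1}^{a_n+\phi(n)}|x_k-x|$ controlling the bad set, is exactly the standard argument for this kind of Ces\`aro statement. Note that the paper itself gives no proof here --- it defers to Lemma 2 of reference [12], whose argument is essentially the one you wrote out --- so your proposal simply supplies the omitted details, and correctly observes that only boundedness, continuity at the single point $x$, and the hypothesis on the delayed averages are used, with no probabilistic or Markov structure involved.
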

\begin{proof} The proof of this lemma is similar to that of Lemma 2 in [12], so we omit it.
\end{proof}

\begin{lem} Let $(\xi_n)_{n=0}^\infty$ be a sequence of arbitrary
random variables taking values from a finite state-space of $\mathbf{X}
=\{1,2,\cdots,b\}$, and let $f_{a_n,\phi(n)}(\omega)$ be defined by
Eq. (1.2). Then $f_{a_n,\phi(n)}(\omega)$ are uniformly
integrable.\end{lem}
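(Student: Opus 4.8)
The plan is to verify the definition of uniform integrability directly, i.e. to show that
$$\lim_{\lambda\to\infty}\sup_n E\left[f_{a_n,\phi(n)}\,\mathbf{1}_{\{f_{a_n,\phi(n)}>\lambda\}}\right]=0.$$
First I would note that the Markov hypothesis plays no role here: the statement concerns arbitrary random variables, and the only structure available is that $p(\cdot)$ is a genuine probability mass function on the finite set $\mathbf{X}^{\phi(n)+1}$. Since each $\xi_k$ takes values in $\mathbf{X}$ and $\xi_{a_n,\phi(n)}$ is a finite random vector, we have $p(\xi_{a_n,\phi(n)})\in(0,1]$ almost surely, so $f_{a_n,\phi(n)}\geq0$ and each member of the family is a bounded (hence integrable) random variable assuming finitely many values. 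Consequently only the tails matter, and it suffices to control the truncated expectation above.

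Writing $N=\phi(n)$ and summing over the at most $b^{N+1}$ realizations $x$ of $\xi_{a_n,\phi(n)}$, the truncated expectation takes the form
$$E\left[f_{a_n,\phi(n)}\,\mathbf{1}_{\{f_{a_n,\phi(n)}>\lambda\}}\right]=\frac{1}{N}\sum_{x:\,p(x)<e^{-\lambda N}}\bigl(-p(x)\log p(x)\bigr).$$
The key step is to exploit the monotonicity of $\psi(u)=-u\log u$: on $(0,e^{-1}]$ this function is increasing (as $\psi'(u)=-\log u-1>0$ there), so once $\lambda>1$ we have $e^{-\lambda N}<e^{-1}$ and each summand is bounded by $\psi(e^{-\lambda N})=\lambda N e^{-\lambda N}$. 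Bounding the number of summands by $b^{N+1}$ then yields
$$E\left[f_{a_n,\phi(n)}\,\mathbf{1}_{\{f_{a_n,\phi(n)}>\lambda\}}\right]\leq \frac{1}{N}\,b^{N+1}\,\lambda N e^{-\lambda N}=b\,\lambda\,(be^{-\lambda})^{N}.$$

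To finish, I would take $\lambda>\max(1,\log b)$, so that $be^{-\lambda}<1$ and hence $(be^{-\lambda})^{N}\leq be^{-\lambda}$ for every $N\geq1$; this gives the $n$-independent bound $b^{2}\lambda e^{-\lambda}$, which tends to $0$ as $\lambda\to\infty$, establishing uniform integrability. (Any finitely many indices $n$ with $\phi(n)$ too small to run the estimate form a finite, hence automatically uniformly integrable, subfamily and may simply be discarded.) The main obstacle—and essentially the only delicate point—is that the crude cardinality bound $b^{N+1}$ grows exponentially in $N$; the argument works only because the monotonicity estimate supplies the competing factor $e^{-\lambda N}$, and the product $b^{N+1}e^{-\lambda N}=b(be^{-\lambda})^{N}$ behaves favourably precisely when $\lambda$ exceeds $\log b$. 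It is then important that this geometric factor is largest at the smallest admissible exponent $N=1$, which is exactly what renders the supremum over $n$ finite and vanishing as $\lambda\to\infty$.
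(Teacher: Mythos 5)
Your proof is correct, but it follows a genuinely different route from the paper's. You verify the tail-expectation definition of uniform integrability directly: you identify the event $\{f_{a_n,\phi(n)}>\lambda\}$ with $\{p(\xi_{a_n,\phi(n)})<e^{-\lambda\phi(n)}\}$, bound each summand $-p(x)\log p(x)$ by the monotonicity of $\psi(u)=-u\log u$ on $(0,e^{-1}]$, and then beat the crude cardinality bound $b^{\phi(n)+1}$ with the factor $e^{-\lambda\phi(n)}$, which works uniformly in $n$ once $\lambda>\log b$ (your observation that the geometric factor $(be^{-\lambda})^{N}$ is worst at $N=1$ is exactly the right point). The paper instead verifies Chung's two-condition criterion (uniform absolute continuity of the integrals plus bounded means, cited from [5], p.~96): for an arbitrary event $A$ it applies the entropy (Gibbs) inequality $-\sum_k p_k\log p_k\le-\sum_k p_k\log q_k$, comparing the measure $\mathbb{P}(A\cap\{\xi_{a_n,\phi(n)}=x\})$ against the uniform weights $\mathbb{P}(A)/b^{\phi(n)+1}$, and arrives at the bound $\int_A f_{a_n,\phi(n)}\,d\mathbb{P}\le(2\log b-\log\mathbb{P}(A))\mathbb{P}(A)$, valid for every $A\in\mathcal{F}$ and every $n$. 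The trade-off: the paper's argument yields a stronger, quantitative estimate over all events $A$ (not only the tail events), with the dimension counting absorbed cleanly into the comparison distribution; your argument is more elementary in that it needs no entropy inequality and no equivalence of UI criteria, only calculus of $\psi$ and counting, but it exploits the special product structure of the tail events. Both hinge on the same balance between the $b^{\phi(n)+1}$ realizations and the $1/\phi(n)$ normalization, so neither is accidental. Two minor points in your write-up: $\psi'(e^{-1})=0$, so you should say $\psi$ is increasing on $(0,e^{-1}]$ with $\psi'>0$ on the open interval; and the indices you discard are only those (finitely many, since $\phi(n)\to\infty$) with $\phi(n)=0$, for which $f_{a_n,\phi(n)}$ is not even defined, so the discarding step is harmless exactly as you say.
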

\begin{proof} To prove that $f_{a_n,\phi(n)}(\omega)$ are uniformly integrable, it is sufficient to verify the following two conditions (see [5], p.96)

a) For every $\varepsilon>0$, there exists $\delta(\varepsilon)>0$
such that for any $A\in\mathcal{F}$, $$
\mathbb{P}(A)<\delta(\varepsilon)\Longrightarrow
\int_Af_{a_n,\phi(n)}(\omega)d\mathbb{P}<\varepsilon\ \ \textrm{for
every}\  n.$$

b) $Ef_{a_n,\phi(n)}(\omega)$ are bounded for all $n$.

\

Let $A\in\mathcal{F}$. It is easy to see that
\begin{align}&\int_Af_{a_n,\phi(n)}(\omega)d\mathbb{P}\notag\\=&-\int_A\frac{1}{\phi(n)}\log p(\xi_{a_n,\phi(n)})d\mathbb{P}\notag\\
=&-\sum_{x_{a_n},\cdots,x_{a_n+\phi(n)}}\frac{1}{\phi(n)}\log
p(x_{a_n,\phi(n)})\cdot\mathbb{P}(A\cap\{\xi_{a_n,\phi(n)}=x_{a_n,\phi(n)}\})\notag\\
\leq&-\sum_{x_{a_n},\cdots,x_{a_n+\phi(n)}}\frac{1}{\phi(n)}\log
\mathbb{P}(A\cap\{\xi_{a_n,\phi(n)}=x_{a_n,\phi(n)}\})\cdot\mathbb{P}(A\cap\{\xi_{a_n,\phi(n)}=x_{a_n,\phi(n)}\}).\end{align}
Replacing $\log
\mathbb{P}(A\cap\{\xi_{a_n,\phi(n)}=x_{a_n,\phi(n)}\})$ by $\log
\frac{\mathbb{P}(A)}{b^{\phi(n)+1}}$ in Eq. (2.18) and noting that
$$\sum_{x_{a_n},\cdots,x_{a_n+\phi(n)}}\mathbb{P}(A\cap\{\xi_{a_n,\phi(n)}=x_{a_n,\phi(n)}\})=\mathbb{P}(A)=\sum_{x_{a_n},\cdots,x_{a_n+\phi(n)}}\frac{\mathbb{P}(A)}{b^{\phi(n)+1}},$$
by the entropy inequality $$-\sum_{k=1}^sp_k\log
p_k\leq-\sum_{k=1}^sp_k\log q_k,$$ where $p_k,q_k\geq0,\
k=1,2,\cdots,s$ and $\sum_{k=1}^sp_k=\sum_{k=1}^sq_k$, we have
\begin{align}&\int_Af_{a_n,\phi(n)}(\omega)d\mathbb{P}\notag\\\leq&-\sum_{x_{a_n},\cdots,x_{a_n+\phi(n)}}\frac{1}{\phi(n)}\log
\frac{\mathbb{P}(A)}{b^{\phi(n)+1}}\cdot\mathbb{P}(A\cap\{\xi_{a_n,\phi(n)}=x_{a_n,\phi(n)}\})\notag\\
=&-\frac{1}{\phi(n)}\left(\log
\frac{\mathbb{P}(A)}{b^{\phi(n)+1}}\sum_{x_{a_n},\cdots,x_{a_n+\phi(n)}}\mathbb{P}(A\cap\{\xi_{a_n,\phi(n)}=x_{a_n,\phi(n)}\})\right)\notag\\
=&\left(\frac{\phi(n)+1}{\phi(n)}\log
b-\frac{\log\mathbb{P}(A)}{\phi(n)}\right)\cdot\mathbb{P}(A)\notag\\
\leq&(2\log b-\log\mathbb{P}(A))\mathbb{P}(A).\end{align} Since
$\lim_{x\rightarrow0^+}x(2\log b-\log x)=0$, the left hand side of Eq.
(2.19) is small provided $\mathbb{P}(A)$ is small and a) holds.
Letting $A=\Omega$ in Eq. (2.19), we have $$
Ef_{a_n,\phi(n)}(\omega)=\int f_{a_n,\phi(n)}(\omega)d\mathbb{P}\leq
2\log b.$$ Thus b) holds and the proof of the Lemma 5 is complete.
\end{proof}

 \

\section{\bf The Main Results}

\

In this section, we will establish the strong law of large numbers
for frequencies of occurrence of states and the pairs of states for
delayed sums of nonhomogeneous Markov chains and the generalized
entropy ergodic theorem for the Markov chains.

\
\begin{thm}  Suppose $(\xi_n)_{n=0}^\infty$ is a nonhomogeneous Markov chain taking values from a finite state-space of $\mathbf{X} =\{1,2,\cdots,b\}$ with the initial
distribution (1.4) and the transition matrices (1.5). Let
$(a_n)_{n=0}^\infty$ and $(\phi(n))_{n=0}^\infty$ be as in Lemma 1.
Let $S_{a_n,\phi(n)}(i,\omega)$ and $S_{a_n,\phi(n)}(i,j;\omega)$ be
defined as before, and $f_{a_n,\phi(n)}(\omega)$ be defined by
Eq. (1.6). Let $P=(p(i,j))_{b\times b}$ be another transition
matrix, and assume that $P$ is irreducible. If Eq. (2.1) holds and
\bes\lim_{n}\frac{1}{\phi(n)}\sum_{k=a_n+1}^{a_n+\phi(n)}|p_{k}(i,j)-p(i,j)|=0,
\ \ \forall i,j\in \mathbf{X},\ees then
\begin{align}(i)\ \ &\lim_{n}\frac{1}{\phi(n)}S_{a_n,\phi(n)}(i;\omega)=\pi_{i}
\ \ a.e.\ \ \forall i\in\mathbf{X},\\
(ii)\ \
&\lim_{n}\frac{1}{\phi(n)}S_{a_n,\phi(n)}(i,j;\omega)=\pi_{i}p(i,j)
\ \ a.e.\ \forall i,j \in\mathbf{X},\\
 (iii)\ \ &
\lim_{n}f_{a_n,\phi(n)}(\omega)=-\sum_{i=1}^b\sum_{j=1}^b\pi_{i}p(i,j)\log
p(i,j) \ \ a.e.,\  \  \ \ \ \ \ \
\end{align} where $(\pi_1,\cdots,\pi_b)$ is the unique stationary
distribution determined by the transition matrix $P$.
\end{thm}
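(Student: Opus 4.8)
The three assertions are naturally nested: (i) is the heart of the matter, (ii) is a short consequence of (i) together with Corollary 2, and (iii) follows once the frequencies in (i)--(ii) are in hand. In every step the mechanism is the same: Corollary 1, Corollary 2 and Lemma 1 reduce a sample average to a Ces\`aro average of the transition probabilities $p_k$, and hypothesis (3.1) lets me replace the time-varying $p_k$ by the limiting matrix $P$.

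For (i) I would start from Corollary 1, i.e. $\phi(n)^{-1}S_{a_n,\phi(n)}(j;\omega)-\phi(n)^{-1}\sum_{k=a_n+1}^{a_n+\phi(n)}p_k(\xi_{k-1},j)\to 0$ a.e. Using $|p_k(\xi_{k-1},j)-p(\xi_{k-1},j)|\le\sum_i|p_k(i,j)-p(i,j)|$ and (3.1), I replace $p_k$ by $p$; since $p(\xi_{k-1},j)=\sum_i\mathbf 1_{\{i\}}(\xi_{k-1})p(i,j)$ and $\sum_{k=a_n+1}^{a_n+\phi(n)}\mathbf 1_{\{i\}}(\xi_{k-1})=S_{a_n,\phi(n)}(i;\omega)$, this reads, with $\delta_n(i):=\phi(n)^{-1}S_{a_n,\phi(n)}(i;\omega)$,
\[
\delta_n(j)-\sum_{i=1}^b\delta_n(i)p(i,j)\longrightarrow 0\quad a.e.,\qquad j\in\mathbf X,
\]
while $\sum_j\delta_n(j)=1$ identically. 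On a set of full measure the vectors $(\delta_n(i))_i$ lie in the compact probability simplex and asymptotically solve $\delta=\delta P$; \emph{this compactness/uniqueness step is the main obstacle, since it is where the irreducibility hypothesis actually enters.} I would argue pathwise: any subsequential limit $\delta^{\ast}$ satisfies $\delta^{\ast}=\delta^{\ast}P$ and $\sum_i\delta^{\ast}_i=1$, hence equals the \emph{unique} stationary vector $\pi$ determined by the irreducible finite matrix $P$; as every subsequential limit is $\pi$, the whole sequence converges and (i) follows.

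Assertion (ii) is then immediate: Corollary 2 gives $\phi(n)^{-1}S_{a_n,\phi(n)}(i,j;\omega)-\phi(n)^{-1}\sum_k\mathbf 1_{\{i\}}(\xi_{k-1})p_k(i,j)\to 0$, and replacing $p_k(i,j)$ by $p(i,j)$ via (3.1) leaves $\phi(n)^{-1}S_{a_n,\phi(n)}(i,j;\omega)-p(i,j)\delta_n(i)\to 0$, so (i) yields the limit $\pi_i p(i,j)$. For (iii) I first dispose of the initial term of (1.6): since $-\log\mu_{a_n}(\xi_{a_n})\ge 0$ and $\mathbb P(\mu_{a_n}(\xi_{a_n})<e^{-\varepsilon\phi(n)})\le b\,e^{-\varepsilon\phi(n)}$, hypothesis (2.1) and Borel--Cantelli give $-\phi(n)^{-1}\log\mu_{a_n}(\xi_{a_n})\to 0$ a.e. For the main term I would apply Lemma 1 to $g_k(x,y)=\log p_k(x,y)$ (set to $0$ where $p_k=0$, a null contribution); the delicate point is that (2.2) holds for every $0<\gamma<1$, because
\[
E\big[|g_k(\xi_{k-1},\xi_k)|^2e^{\gamma|g_k(\xi_{k-1},\xi_k)|}\mid\xi_{k-1}=i\big]=\sum_j p_k(i,j)^{1-\gamma}\big(\log p_k(i,j)\big)^2\le b\sup_{0<p\le1}p^{1-\gamma}(\log p)^2<\infty.
\]
Lemma 1 then shows that $-\phi(n)^{-1}\sum_k\log p_k(\xi_{k-1},\xi_k)$ has the same a.e. limit as $\phi(n)^{-1}\sum_k H_k(\xi_{k-1})$, where $H_k(i)=-\sum_j p_k(i,j)\log p_k(i,j)$. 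Finally, Lemma 3 applied to the continuous entropy function, together with (3.1), gives $\phi(n)^{-1}\sum_k|H_k(i)-H(i)|\to 0$ with $H(i)=-\sum_j p(i,j)\log p(i,j)$; hence $\phi(n)^{-1}\sum_k H_k(\xi_{k-1})$ and $\sum_i H(i)\delta_n(i)$ share the same limit, which by (i) is $\sum_i\pi_i H(i)=-\sum_{i,j}\pi_i p(i,j)\log p(i,j)$, establishing (iii).
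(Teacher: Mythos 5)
Your proposal is correct, and for parts (ii) and (iii) it follows essentially the same route as the paper: Corollary 2 plus hypothesis (3.1) plus part (i) gives (ii); and for (iii), the Borel--Cantelli disposal of the initial term $\phi(n)^{-1}\log\mu_{a_n}(\xi_{a_n})$, Lemma 1 applied to $g_k(x,y)=\log p_k(x,y)$ (the paper fixes $\gamma=\tfrac12$ and bounds $\sum_j p_k^{1/2}\log^2 p_k\le 16be^{-2}$, while you allow any $\gamma\in(0,1)$ --- same idea), and the continuity lemma for $x\log x$ (note this is the paper's Lemma 2, not Lemma 3; Lemma 3 is the uniform integrability statement). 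The genuine difference is in part (i). After both you and the paper arrive at $\delta_n(j)-\sum_{i}\delta_n(i)p(i,j)\to 0$ a.e.\ with $\sum_j \delta_n(j)=1$, the paper proceeds algebraically: it multiplies by $p(j,k)$, sums over $j$, and iterates to get $\delta_n(k)-\sum_i \delta_n(i)p^{(l)}(i,k)\to 0$ a.e.\ for every $l\ge 1$, then Ces\`aro-averages over $l$ and invokes $\lim_m \frac1m\sum_{l=1}^m p^{(l)}(i,k)=\pi_k$ (valid for irreducible, possibly periodic, finite chains), which yields the explicit bound $\limsup_n |\delta_n(k)-\pi_k|\le \sum_i |\frac1m\sum_{l=1}^m p^{(l)}(i,k)-\pi_k|$ for each $m$. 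You instead argue pathwise by compactness: on a full-measure set the vectors $\delta_n$ lie in the probability simplex, every subsequential limit $\delta^{\ast}$ solves $\delta^{\ast}=\delta^{\ast}P$, hence equals $\pi$ by uniqueness of the stationary distribution of an irreducible finite matrix, so the whole sequence converges to $\pi$. Both arguments are valid and both use irreducibility, but in different guises: the paper through the Ces\`aro ergodic limit of the powers of $P$, you through uniqueness of the stationary vector. Your soft argument is shorter and avoids the mean ergodic theorem for finite chains; the paper's iteration buys a quantitative error estimate governed by the speed of the Ces\`aro convergence.
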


\begin{rem} It is easy to see that if
$ \lim_n p_n(i,j)= p(i,j)  \ \ \forall i,j \in\mathbf{X},  $
 then Eq. (3.1) holds. Observe that
 $$ \frac{1}{\phi(n)}\sum_{k=a_n+1}^{a_n+\phi(n)}|p_{k}(i,j)-p(i,j)|
   \leq (1+\frac{a_n}{\phi(n)})\frac{1}{a_n+\phi(n)}\sum_{k=1}^{a_n+\phi(n)}|p_{k}(i,j)-p(i,j)|.$$
   If, in addition, $\{\frac{a_n}{\phi(n)}\}$ is bounded, then Eq. (3.1) follows from the following equation
\bes\lim_n\frac{1}{n}\sum_{k=1}^n|p_k(i,j)-p(i,j)|=0 \ \  \forall
i,j \in\mathbf{X}.  \ees But in general Eq. (3.5) may not imply
(3.1). For example, let
$$ P_1=\left[\begin{array}{cc}\frac{1}{3}&\frac{2}{3}\\
\frac{2}{3}&\frac{1}{3}\end{array}\right] ,  P_2=\left[\begin{array}{cc}\frac{1}{2}&\frac{1}{2}\\
\frac{1}{2}&\frac{1}{2}\end{array}\right].  $$
 Let $(\xi_n)_{n=0}^\infty$ be a nonhomogeneous Markov chain with
 transition matrices
$$ P_n=\left\{
\begin{array}{cc}
P_1, \ \   &\textrm{if} \ 2^k\leq n\leq 2^k+k, k\geq 0,\\
\\
P_2,  \ \   &\textrm{otherwise} .
\end{array}
\right.              $$
Let $P=P_2$. It is easy to see that when $2^k\leq n< 2^{k+1}$, for any $i,j \in\mathbf{X}$
\begin{align}
 &\frac{1}{n}\sum_{l=1}^n|p_l(i,j)-p(i,j)|
 \leq \frac{1}{2^k}\sum_{l=1}^{2^{k+1}-1}|p_l(i,j)-p(i,j)|\notag\\
 & \leq \frac{1+2+3+\cdots+(k+1)}{2^k}\frac{1}{6}=\frac{1}{2^k}\frac{(k+2)(k+1)}{2}\frac{1}{6}\rightarrow 0 \ (k\rightarrow \infty). \notag
\end{align}
So Eq. (3.5) holds. However, if we let $a_n=2^n$ and $\phi(n)=n$,
then
\begin{align}
\frac{1}{\phi(n)}\sum_{k=a_n+1}^{a_n+\phi(n)}|p_{k}(i,j)-p(i,j)|=\frac{1}{n}\sum_{k=2^n+1}^{2^n+n}|p_{k}(i,j)-p(i,j)|=\frac{1}{6},\notag
\end{align}
so Eq. (3.1) does not hold.
\end{rem}

\begin{rem}
From Lemma 3, we know that
$\frac{1}{\phi(n)}S_{a_n,\phi(n)}(i;\omega)$,
$\frac{1}{\phi(n)}S_{a_n,\phi(n)}(i,j;\omega)$ and
$f_{a_n,\phi(n)}(\omega)$  are all uniformly integrable, so Eqs.
(3.2), (3.3) and (3.4)  also hold with $\mathcal{L}_1$ convergence.
\end{rem}

\begin{rem}
The right hand side of Eq. (3.4) is actually the entropy rate of a Markov
chain with the transition matrix $P$.
\end{rem}

\begin{rem}
If we define a statistic as follows:
$$\hat{H}=-\sum_{i=1}^b\sum_{i=1}^b\frac{S_{a_n,\phi(n)}(i;\omega)}{\phi(n)}\frac{S_{a_n,\phi(n)}(i,j;\omega)}{S_{a_n,\phi(n)}(i;\omega)}\log \frac{S_{a_n,\phi(n)}(i,j;\omega)}{S_{a_n,\phi(n)}(i;\omega)}, $$
it is easy to see from Theorem 1 that $\hat{H}$ is a strongly consistent estimate of entropy rate $H$, where
$$H= -\sum_{i=1}^b\sum_{j=1}^b\pi_{i}p(i,j)\log
p(i,j).$$ Putting $a_n=2^n$ and $\phi(n)=n$, under the condition of Eq.
(3.1), we can use information from a segment of
$(\xi_n)_{n=0}^\infty$ to estimate the entropy rate of a
nonhomogeneous Markov chain.
\end{rem}

\

\begin{proof} Proof of (i). It is easy to see
that \bes\sum_{k=a_n+1}^{a_n+\phi(n)}p_{k}(
\xi_{k-1},j)=\sum_{k=a_n+1}^{a_n+\phi(n)}\sum_{i=1}^b\mathbf{1}_{\{i\}}(\xi_{k-1})p_k(i,j),\
\ \forall j\in \mathbf{X},\ees and
\bes\sum_{k=a_n+1}^{a_n+\phi(n)}\sum_{i=1}^b\mathbf{1}_{\{i\}}(\xi_{k-1})p(i,j)=\sum_{i=1}^bS_{n,\phi(n)}(i;\omega)p(i,j),\
\ \forall j\in \mathbf{X}.\ees From (3.1), we have that
\begin{align}&\lim_{n}\left|\frac{1}{\phi(n)}\sum_{k=a_n+1}^{a_n+\phi(n)}\sum_{i=1}^b\mathbf{1}_{\{i\}}(\xi_{k-1})[p_k(i,j)-p(i,j)]\right|\notag\\
\leq&\sum_{i=1}^b\lim_{n}\frac{1}{\phi(n)}\sum_{k=a_n+1}^{a_n+\phi(n)}|p_{k}(i,j)-p(i,j)|=0,
\ \ \forall j\in \mathbf{X}.\end{align} Combining  Eqs. (2.14), (3.6),
(3.7) and (3.8), we have
\begin{align}&\lim_{n}\frac{1}{\phi(n)}[S_{n,\phi(n)}(j;\omega)-\sum_{i=1}^bS_{a_n,\phi(n)}(i;\omega)p(i,j)]\notag\\
=&
\lim_{n}\frac{1}{\phi(n)}\sum_{k=a_n+1}^{a_n+\phi(n)}\sum_{i=1}^b\mathbf{1}_{\{i\}}(\xi_{k-1})[p_k(i,j)-p(i,j)]\notag\\
=&0, \ \ a.e. \ \ \forall j\in \mathbf{X}.\end{align}
 Multiplying the
two sides of Eq. (3.9) by $p(j,k)$, and  adding them together
 for $j=1,2,\ldots,b$, we have
\begin{align}0=&
\lim_{n}\frac{1}{\phi(n)}[\sum_{j=1}^bS_{a_n,\phi(n)}(j;\omega)p(j,k)-\sum_{j=1}^b\sum_{i=1}^bS_{a_n,\phi(n)}(i;\omega)p(i,j)p(j,k)]\notag\\
=&\lim_{n}[\sum_{j=1}^b\frac{1}{\phi(n)}S_{a_n,\phi(n)}(j;\omega)p(j,k)-\frac{1}{\phi(n)}S_{a_n,\phi(n)}(k;\omega)]\notag\\&+\lim_n[\frac{1}{\phi(n)}S_{a_n,\phi(n)}(k;\omega)-
\sum_{j=1}^b\sum_{i=1}^b\frac{1}{\phi(n)}S_{a_n,\phi(n)}(i;\omega)p(i,j)p(j,k)]\notag\\=&\lim_{n}[\frac{1}{\phi(n)}S_{a_n,\phi(n)}(k;\omega)-\sum
_{i=1}^{b}\frac{1}{\phi(n)}S_{a_n,\phi(n)}(i;\omega)p^{(2)}(i,k)]\ \
a.e.,
\end{align} where $p^{(l)}(i, k)$ ($l$ is a positive integer) is
the $l$-step transition probability determined by the transition
matrix $P$. By induction, for all $l\geq1$, we have
\bes\lim_{n}\frac{1}{\phi(n)}[S_{a_n,\phi(n)}(k;\omega)-\sum
_{i=1}^{b}S_{a_n,\phi(n)}(i;\omega)p^{(l)}(i,k)]= 0, \ \ a.e., \ees
and
\bes\lim_{n}[\frac{1}{\phi(n)}S_{a_n,\phi(n)}(k;\omega)-\frac{1}{\phi(n)}\sum
_{i=1}^{b}S_{a_n,\phi(n)}(i;\omega)\frac{1}{m}\sum_{l=1}^mp^{(l)}(i,k)]=
0, \ \ a.e. \ees It is easy to see that
$\sum_{i=1}^bS_{a_n,\phi(n)}(i,\omega)=\phi(n)$, by (3.12), we have
for all $m\geq 1$
\bes\limsup_{n}|\frac{1}{\phi(n)}S_{a_n,\phi(n)}(k;\omega)-\pi_k|\leq
\sum_{i=1}^b|\frac{1}{m}\sum_{l=1}^mp^{(l)}(i,k)-\pi_k|
 \ \ a.e. \ees
Because $P$ is irreducible, so
\bes\lim_m\frac{1}{m}\sum_{l=1}^mp^{(l)}(i,k)=\pi_k,\ \ \forall
i\in\mathbf{X},\ees Eq. (3.2) follows from Eqs. (3.13) and (3.14).

\

Proof of (ii). Observe that
\bes\sum_{k=a_n+1}^{a_n+\phi(n)}\mathbf{1}_{\{i\}}(\xi_{k-1})p(i,j)=S_{a_n,\phi(n)}(i;\omega)p(i,j).
\ees From Eq. (3.1), we have that
\bes\lim_n\frac{1}{\phi(n)}\sum_{k=a_n+1}^{a_n+\phi(n)}\mathbf{1}_{\{i\}}(\xi_{k-1})[p_{k}(i,j)-p(i,j)]=0.\ees
Combining  Eqs. (2.16), (3.15) and (3.16), we have
\begin{align}&\lim_{n}\frac{1}{\phi(n)}[S_{a_n,\phi(n)}(i,j;\omega)-S_{a_n,\phi(n)}(i;\omega)p(i,j)]\notag\\
=&\lim_{n}\frac{1}{\phi(n)}\sum_{k=a_n+1}^{a_n+\phi(n)}\mathbf{1}_{\{i\}}(\xi_{k-1})[p_{k}(i,j)-p(i,j)]=0
\ \ a.e.\end{align} Eq. (3.3) follows from Eqs. (3.2) and (3.17).

\

Proof of (iii).  Since
$Ee^{|\log\mu_{a_n}(\xi_{a_n})|}=\sum_{i=1}^be^{-\log\mu_{a_n}(i)}\mu_{a_n}(i)=b$,
by Markov inequality, for every $\varepsilon>0$, form Eq.
(2.1), we have
\begin{align}\sum_{n=1}^\infty \mathbb{P}\left[\phi(n)^{-1}|\log\mu_{a_n}(\xi_{a_n})|\geq
\varepsilon\right]\leq
b\sum_{n=1}^\infty\exp(-\phi(n)\varepsilon)<\infty.\end{align} By Borel-Cantelli lemma,  we
obtain
\bes\lim_n\frac{1}{\phi(n)}\log\mu_{a_n}(\xi_{a_n})=0\ \ a.e.\ees
 Letting $g_k(x,y)=\log p_k(x,y)$ and $\gamma=\frac{1}{2}$ in Lemma 1, and noticing that
\begin{align}&E[(\log p_k(\xi_{k-1},\xi_k))^2e^{\frac{1}{2}|\log
p_k(\xi_{k-1},\xi_k)|}|\xi_{k-1}]\notag\\
=&\sum_{j=1}^bp_k^{-\frac{1}{2}}(\xi_{k-1},j)\log^2p_k(\xi_{k-1},j)
p_k(\xi_{k-1},j)\notag\\
=&\sum_{j=1}^bp_k^{\frac{1}{2}}(\xi_{k-1},j)\log^2p_k(\xi_{k-1},j)\leq 16be^{-2} ,
 \end{align} it
follows from the Lemma 1 that
\bes\lim_n\frac{1}{\phi(n)}\sum_{k=a_n+1}^{a_n+\phi(n)}\left\{\log
p_k(\xi_{k-1},\xi_k)-\sum_{j=1}^bp_k(\xi_{k-1},j)\log
p_k(\xi_{k-1},j)\right\}=0\ \ a.e.\ees Now
\begin{align}&|\frac{1}{\phi(n)}\sum_{k=a_n+1}^{a_n+\phi(n)}\sum_{j=1}^bp_k(\xi_{k-1},j)\log
p_k(\xi_{k-1},j)-\sum_{i=1}^b\pi_i\sum_{j=1}^bp(i,j)\log
p(i,j)|\notag\\
\leq&|\frac{1}{\phi(n)}\sum_{k=a_n+1}^{a_n+\phi(n)}\sum_{i=1}^b\sum_{j=1}^b\mathbf{1}_{\{i\}}(\xi_{k-1})p_k(i,j)\log
p_k(i,j)\notag\\&-\frac{1}{\phi(n)}\sum_{k=a_n+1}^{a_n+\phi(n)}\sum_{i=1}^b\sum_{j=1}^b\mathbf{1}_{\{i\}}(\xi_{k-1})p(i,j)\log p(i,j)|\notag\\
&+|\frac{1}{\phi(n)}\sum_{k=a_n+1}^{a_n+\phi(n)}\sum_{i=1}^b\sum_{j=1}^b\mathbf{1}_{\{i\}}(\xi_{k-1})p(i,j)\log
p(i,j)-\sum_{i=1}^b\pi_i\sum_{j=1}^bp(i,j)\log
p(i,j)|\notag\\
\leq&\sum_{i=1}^b\sum_{j=1}^b\frac{1}{\phi(n)}\sum_{k=a_n+1}^{a_n+\phi(n)}|p_k(i,j)\log
p_k(i,j)-p(i,j)\log p(i,j)|\notag\\
&+\sum_{i=1}^b\sum_{j=1}^b|p(i,j)\log
p(i,j)||\frac{1}{\phi(n)}\sum_{k=a_n+1}^{a_n+\phi(n)}\mathbf{1}_{\{i\}}(\xi_{k-1})-\pi_i|.\end{align}
By Lemma 2, Eq. (3.1) and the continuity of $h(x)=x\log x$, we have
\bes\lim_n\frac{1}{\phi(n)}\sum_{k=a_n+1}^{a_n+\phi(n)}|p_k(i,j)\log
p_k(i,j)-p(i,j)\log p(i,j)|=0\ \ \forall i,j\in\mathbf{X}.\ees Combining
Eqs. (3.21), (3.22), (3.23) and (3.2), we have
\begin{align}&\lim_n\frac{1}{\phi(n)}\sum_{k=a_n+1}^{a_n+\phi(n)}\log
p_k(\xi_{k-1},\xi_k)=\sum_{i=1}^b\pi_i\sum_{j=1}^bp(i,j)\log p(i,j)\
\ a.e.\end{align} From Eqs. (1.6), (3.19) and (3.24), Eq. (3.4)
follows.
\end{proof}

\
\begin{cor}(see [12]).
Under the conditions of Theorem 1, if Eq. (3.5) holds, then
\begin{align}(i)\ \ &\lim_{n}\frac{1}{n}S_{0,n}(i;\omega)=\pi_{i}
\ \ a.e., \textrm{and} \ \ \mathcal{L}_1 \ \forall i\in\mathbf{X},\\
(ii)\ \
&\lim_{n}\frac{1}{n}S_{0,n}(i,j;\omega)=\pi_{i}p(i,j)
\ \ a.e.,\ \textrm{and} \ \ \mathcal{L}_1 \ \forall i,j \in\mathbf{X},\\
 (iii)\ \ &
\lim_{n}f_{0,n}(\omega)=-\sum_{i=1}^b\sum_{j=1}^b\pi_{i}p(i,j)\log
p(i,j) \ \ a.e.,\ \textrm{and} \ \ \mathcal{L}_1, \  \ \ \ \ \ \
\end{align}
where $(\pi_1,\cdots,\pi_b)$ is the unique stationary
distribution determined by the transition matrix $P$.
\end{cor}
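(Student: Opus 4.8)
The plan is to obtain Corollary 2 as the special case $a_n\equiv 0$, $\phi(n)=n$ of Theorem 1, so that the whole argument reduces to checking that this choice meets the hypotheses of Theorem 1 and then reading off the conclusions. First I would verify the two structural hypotheses. With $\phi(n)=n$, condition (2.1) becomes $\sum_{n=1}^\infty e^{-\varepsilon n}<\infty$ for every $\varepsilon>0$, which holds since the series is geometric. Substituting $a_n=0$ and $\phi(n)=n$ into the averaging hypothesis (3.1) turns $\frac{1}{\phi(n)}\sum_{k=a_n+1}^{a_n+\phi(n)}|p_k(i,j)-p(i,j)|$ into $\frac{1}{n}\sum_{k=1}^n|p_k(i,j)-p(i,j)|$, so the assumed identity (3.5) is exactly (3.1) in this case. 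Thus all hypotheses of Theorem 1 are in force with $S_{a_n,\phi(n)}=S_{0,n}$ and $f_{a_n,\phi(n)}=f_{0,n}$, and applying Theorem 1(i)--(iii) directly yields the almost-everywhere parts of (3.25), (3.26) and (3.27), with the same limits $\pi_i$, $\pi_i p(i,j)$ and $-\sum_{i=1}^b\sum_{j=1}^b\pi_i p(i,j)\log p(i,j)$.

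It then remains to upgrade each almost-everywhere limit to $\mathcal{L}_1$ convergence, and for this I would invoke the standard fact that almost-everywhere convergence together with uniform integrability implies convergence in $\mathcal{L}_1$. The frequency averages $\frac{1}{n}S_{0,n}(i;\omega)$ and $\frac{1}{n}S_{0,n}(i,j;\omega)$ take values in $[0,1]$, hence are uniformly bounded and so trivially uniformly integrable; the entropy density $f_{0,n}(\omega)$ is uniformly integrable by the uniform integrability lemma specialized to $a_n=0$, $\phi(n)=n$, as already recorded in Remark 3. Combining this uniform integrability with the almost-everywhere limits already obtained gives the $\mathcal{L}_1$ convergence in (3.25)--(3.27), which completes the proof.

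I do not expect any genuine obstacle in this corollary, since it is essentially a specialization of Theorem 1. The only nonelementary ingredient, the uniform integrability of the entropy density, has already been isolated as a lemma, and everything else follows from the elementary observation that the frequency averages are bounded by $1$ together with the verification that condition (2.1) is automatic and condition (3.1) collapses to the stated hypothesis (3.5) when $a_n\equiv 0$ and $\phi(n)=n$.
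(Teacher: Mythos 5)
Your proposal is correct and follows essentially the same route as the paper: the paper treats this corollary as the specialization $a_n\equiv 0$, $\phi(n)=n$ of Theorem 1 (under which condition (2.1) is automatic and (3.1) reduces to (3.5)), with the $\mathcal{L}_1$ statements obtained exactly as in Remark 3, namely from uniform integrability (boundedness for the frequencies, Lemma 3 for $f_{0,n}$) combined with the almost-everywhere limits. No gap; your observation that the frequency averages are uniformly integrable simply because they lie in $[0,1]$ is if anything slightly cleaner than the paper's citation of Lemma 3 for all three quantities.
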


\

\section{\bf Acknowledgment }

The authors are very thankful to Professor Keyue Ding who helped us to improve the English of this paper greatly, and very thankful to the reviewers for their valuable comments.

\

   \
   \
   This paper has been accepted for publication in Journal of Theoretical Probability.
       \end{document}